\documentclass[12pt]{amsart}
%%%%%%%%%%%%%%%%%%%%%%%%%%%%%%%%%%%%%%%%%%%%%%%%%%%%%%%%%%%%%%%%%%%%%%%%%%%%%%%%%%%%%%%%%
%%User Defined Packages
%%%%%%%%%%%%%%%%%%%%%%%%%%%%%%%%%%%%%%%%%%%%%%%%%%%%%%%%%%%%%%%%%%%%%%%%%%%%%%%%%%%%%%%%%
\usepackage[osf,sc]{mathpazo}
\usepackage{amssymb}

\usepackage{geometry}\geometry{a4paper,left=30mm, right=30mm, top=35mm, bottom=35mm}
\usepackage{bm}
\usepackage{bbm}
\usepackage{graphicx}
\usepackage{hyperref}
\hypersetup{
    colorlinks=true, %set true if you want colored links
    linktoc=all,     %set to all if you want both sections and subsections linked
    linkcolor=blue,
    citecolor=red,
    filecolor=black,
    urlcolor=blue	%choose some color if you want links to stand out
}
%\usepackage[pagewise, right]{lineno}%\linenumbers
%%%%%%%%%%%%%%%%%%%%%%%%%%%%%%%%%%%%%%%%%%%%%%%%%%%%%%%%%%%%%%%%%%%%%%%%%%%%%%%%%%%%%%%%%
% This command ignores the optional argument for itemize and enumerate lists
%%%%%%%%%%%%%%%%%%%%%%%%%%%%%%%%%%%%%%%%%%%%%%%%%%%%%%%%%%%%%%%%%%%%%%%%%%%%%%%%%%%%%%%%%
\usepackage{enumerate}
\usepackage[inline]{enumitem}
\makeatletter
\newcommand{\inlineitem}[1][]{%
\ifnum\enit@type=\tw@
    {\descriptionlabel{#1}}
  \hspace{\labelsep}%
\else
  \ifnum\enit@type=\z@
       \refstepcounter{\@listctr}\fi
    \quad\@itemlabel\hspace{\labelsep}%
\fi} \makeatother
\parindent=0pt
%%%%%%%%%%%%%%%%%%%%%%%%%%%%%%%%%%%%%%%%%%%%%%%%%%%%%%%%%%%%%%%%%%%%%%%%%%%%%%%%%%%%%%%%%
%%%Greek Small Letters
%%%%%%%%%%%%%%%%%%%%%%%%%%%%%%%%%%%%%%%%%%%%%%%%%%%%%%%%%%%%%%%%%%%%%%%%%%%%%%%%%%%%%%%%%

\newcommand{\gl}{\lambda}

%%%%%%%%%%%%%%%%%%%%%%%%%%%%%%%%%%%%%%%%%%%%%%%%%%%%%%%%%%%%%%%%%%%%%%%%%%%%%%%%%%%%%%%%%
%%% Greek Capital Letters
%%%%%%%%%%%%%%%%%%%%%%%%%%%%%%%%%%%%%%%%%%%%%%%%%%%%%%%%%%%%%%%%%%%%%%%%%%%%%%%%%%%%%%%%%

\newcommand{\Gl}{\Lambda}

\newcommand{\Gs}{\Sigma}

\newcommand{\Gom}{\Omega}

%%%%%%%%%%%%%%%%%%%%%%%%%%%%%%%%%%%%%%%%%%%%%%%%%%%%%%%%%%%%%%%%%%%%%%%%%%%%%%%%%%%%%%%%%
%%% User-Defined Symbols
%%%%%%%%%%%%%%%%%%%%%%%%%%%%%%%%%%%%%%%%%%%%%%%%%%%%%%%%%%%%%%%%%%%%%%%%%%%%%%%%%%%%%%%%%

\newcommand{\subs}{\subset}

\newcommand{\sbnq}{\subsetneq}

\newcommand{\mbb}{\mathbb}

\newcommand{\mcl}{\mathcal}

\newcommand{\ol}{\overline}

\newcommand{\us}{\underset}
\newcommand{\os}{\overset}

\newcommand{\lra}{\longrightarrow}

\newcommand{\I}{\mcl I}

\newcommand{\N}{\mbb N}
\newcommand{\Z}{\mbb Z}
\newcommand{\R}{\mcl R}

\newcommand{\eqdef}{\overset{\mathrm{def}}{=\joinrel=}}

\newcommand{\equ}[1]{%
\begin{equation*}
#1
\end{equation*}
}
\newcommand{\equa}[1]{%
\begin{equation*}
\begin{aligned}
#1
\end{aligned}
\end{equation*}
}

%%%%%%%%%%%%%%%%%%%%%%%%%%%%%%%%%%%%%%%%%%%%%%%%%%%%%%%%%%%%%%%%%%%%%%%%%%%%%%%%%%%%%%%%%
%%%Math Operator Declarations
%%%%%%%%%%%%%%%%%%%%%%%%%%%%%%%%%%%%%%%%%%%%%%%%%%%%%%%%%%%%%%%%%%%%%%%%%%%%%%%%%%%%%%%%%

\DeclareMathOperator{\Diag}{Diag}
%%%%%%%%%%%%%%%%%%%%%%%%%%%%%%%%%%%%%%%%%%%%%%%%%%%%%%%%%%%%%%%%%%%%%%%%%%%%%%%%%%%%%%%%%
%%%User Defined Matrices
%%%%%%%%%%%%%%%%%%%%%%%%%%%%%%%%%%%%%%%%%%%%%%%%%%%%%%%%%%%%%%%%%%%%%%%%%%%%%%%%%%%%%%%%%
\newcommand{\mattwo}[4]{%
\begin{pmatrix}
  #1 & #2\\ #3 & #4
\end{pmatrix}
}

\newcommand{\matthree}[9]{%
\begin{pmatrix}
  #1 & #2 & #3\\ #4 & #5 & #6\\ #7 & #8 & #9
\end{pmatrix}
}

%%%%%%%%%%%%%%%%%%%%%%%%%%%%%%%%%%%%%%%%%%%%%%%%%%%%%%%%%%%%%%%%%%%%%%%%%%%%%%%%%%%%%%%%
%%%A Black Square for QED
%%%%%%%%%%%%%%%%%%%%%%%%%%%%%%%%%%%%%%%%%%%%%%%%%%%%%%%%%%%%%%%%%%%%%%%%%%%%%%%%%%%%%%%%

%%%%%%%%%%%%%%%%%%%%%%%%%%%%%%%%%%%%%%%%%%%%%%%%%%%%%%%%%%%%%%%%%%%%%%%%%%%%%%%%%%%%%%%%%
%%%User Defined Environments
%%%%%%%%%%%%%%%%%%%%%%%%%%%%%%%%%%%%%%%%%%%%%%%%%%%%%%%%%%%%%%%%%%%%%%%%%%%%%%%%%%%%%%%%%
\theoremstyle{plain}
\newtheorem{theorem}{Theorem}[section]

\newtheorem{lemma}[theorem]{Lemma}

%%%%%%%%%%%%%%%%%%%%%%%%%%%%%%%%%%%%%%%%%%%%%%%%%%%%%%%%%%%%%%%%%%%%%%%%%%%%%%%%%%%%%%%%%
%%%User Defined Named Labels for Theorem Environments
%%%%%%%%%%%%%%%%%%%%%%%%%%%%%%%%%%%%%%%%%%%%%%%%%%%%%%%%%%%%%%%%%%%%%%%%%%%%%%%%%%%%%%%%%
%%%%%%%%%%%%%%%%%%%%%%%%%%%%%%%%%%%%%%%%%%%%%%%%%%%%%%%%%%%%%%%%%%%%%%%%%%%%%%%%%%%%%%%%
%%%To use a namedlabel for referencing. 
%%%%%%%%%%%%%%%%%%%%%%%%%%%%%%%%%%%%%%%%%%%%%%%%%%%%%%%%%%%%%%%%%%%%%%%%%%%%%%%%%%%%%%%%%
\makeatletter
\def\namedlabel#1#2{\begingroup
	\def\@currentlabel{#2}%
	\label{#1}\endgroup
}
\makeatother
%%%%%%%%%%%%%%%%%%%%%%%%%%%%%%%%%%%%%%%%%%%%%%%%%%%%%%%%%%%%%%%%%%%%%%%%%%%%%%%%%%%%%%%%
%%%%%%%%%%%%%%%%%%%%%%%%%%%%%%%%%%%%%%%%%%%%%%%%%%%%%%%%%%%%%%%%%%%%%%%%%%%%%%%%%%%%%%%%
%%%%%%%%%%%%%%%%%%%%%%%%%%%%%%%%%%%%%%%%%%%%%%%%%%%%%%%%%%%%%%%%%%%%%%%%%%%%%%%%%%%%%%%%

\newtheorem*{thmOmega}{\bf{Theorem} $\bm{\Gom}$}
\newtheorem*{thmSigma}{\bf{Theorem} $\bm{\Gs}$}

\theoremstyle{definition}
\newtheorem{defn}[theorem]{Definition}

\theoremstyle{remark}
\newtheorem{remark}[theorem]{Remark}
\newtheorem{example}[theorem]{Example}
\numberwithin{equation}{section}
%%%%%%%%%%%%%%%%%%%%%%%%%%%%%%%%%%%%%%%%%%%%%%%%%%%%%%%%%%%%%%%%%%%%%%%%%%%%%%%%%%%%%%%%%
%%%Headers and Other First Page Information
%%%%%%%%%%%%%%%%%%%%%%%%%%%%%%%%%%%%%%%%%%%%%%%%%%%%%%%%%%%%%%%%%%%%%%%%%%%%%%%%%%%%%%%%%
\begin{document}
\title[On the Surjectivity of Certain Maps IV: For Congruence Ideal Subgroups]{On the Surjectivity of Certain Maps IV: For Congruence Ideal Subgroups of Type $A_k$ and $C_k$}
\author[C.P. Anil Kumar]{C.P. Anil Kumar}
\address{Post Doctoral Fellow in Mathematics, Room. No. 223, Middle Floor, Main Building, Harish-Chandra Research Institute, Chhatnag Road, Jhunsi, Prayagraj (Allahabad)-211019, Uttar Pradesh, INDIA}
\email{akcp1728@gmail.com}
\subjclass[2010]{Primary 13F05,13A15 Secondary 11D79,11B25,16U60}
\keywords{commutative rings with unity, generalized projective spaces associated to ideals}
\thanks{This work is done while the author is a Post Doctoral Fellow at Harish-Chandra Research Institute, Prayagraj(Allahabad), INDIA.}
\begin{abstract}
For a positive integer $k$, we extend the surjectivity results from special linear groups (Type $A_k$) and symplectic linear groups (Type $C_k$) onto product of generalized projective spaces by associating the rows or columns, to certain congruence ideal subgroups of special linear groups and symplectic linear groups. 
\end{abstract}
\maketitle
%%%%%%%%%%%%%%%%%%%%%%%%%%%%%%%%%%%%%%%%%%%%%%%%%%%%%%%%%%%%%%%%%%%%%%%%%%%%%%%%%%%%%%%%%
%%%%%%%%%%%%%%%%%%%%%%%%%%%%%%%%%%%%%%%%%%%%%%%%%%%%%%%%%%%%%%%%%%%%%%%%%%%%%%%%%%%%%%%%%
%%%%%%%%%%%%%%%%%%%%%%%%%%%%%%%%%%%%%%%%%%%%%%%%%%%%%%%%%%%%%%%%%%%%%%%%%%%%%%%%%%%%%%%%%

\section{\bf{Introduction}}
In this article we extend the main results, namely the surjectivity Theorem $\Gl$ and the surjectivity Theorem $\Gs$ of C.~P.~Anil Kumar~\cite{CPAKII}, to certain congruence ideal subgroups (Definition~\ref{defn:CIS}) of special linear groups of degree $k+1$ (Type $A_k$) and symplectic linear groups of degree $2k$ (Type $C_k$) for $k\in \N$. We also give a sufficient number of examples to explore the aspects of the strong approximation property SAP and the unital set condition USC for an ideal, through which we can understand these notions better. Some of the examples such as~\ref{Example:NotUSC},~\ref{Example:NotSAP} can be understood with the help of some interesting facts about the topology of certain spaces. In Section~\ref{sec:SAP}, we relate the SAP to GE-rings as defined by P.~M.~Cohn~\cite{MR0207856} in Theorems~\ref{theorem:SAP},~\ref{theorem:SAPConverse}. 
\section{\bf{The Main Results}}
We begin with a few definitions. 

%%%%%%%%%%%%%%%%%%%%%%%%%%%%%%%%%%%%%%%%%%%%%%%%%%%%%%%%%%%%%%%%%%%%%%%%%%%%%%%%%%%%%%%%%
%%%%%%%%%%%%%%%%%%%%%%%%%%%%%%%%%%%%%%%%%%%%%%%%%%%%%%%%%%%%%%%%%%%%%%%%%%%%%%%%%%%%%%%%%
%%%%%%%%%%%%%%%%%%%%%%%%%%%%%%%%%%%%%%%%%%%%%%%%%%%%%%%%%%%%%%%%%%%%%%%%%%%%%%%%%%%%%%%%%

\begin{defn}
	\label{defn:ProjSpaceRelation}
	~\\
	Let $\R$ be a commutative ring with unity. Let $k\in \mbb{N}$ and \equ{\mcl{GCD}_{k+1}(\R)=\{(a_0,a_1,a_2,\ldots,a_k)\in \R^{k+1}\mid \us{i=0}{\os{k}{\sum}}\langle a_i\rangle=\R\}.} Let $\I \subsetneq \R$ be an ideal
	and $m_0,m_1,\ldots,m_k\in \mbb{N}$. Define an equivalence relation \equ{\sim^{k,(m_0,m_1,\ldots,m_k)}_{\I}} on $\mcl{GCD}_{k+1}(\R)$
	as follows. For \equ{(a_0,a_1,a_2,\ldots,a_k),(b_0,b_1,b_2,\ldots,b_k) \in \mcl{GCD}_{k+1}(\R)} we say \equ{(a_0,a_1,a_2,\ldots,a_k)\sim^{\{k,(m_0,m_1,\ldots,m_k)\}}_{\I}(b_0,b_1,b_2,\ldots,b_k)} if there exists a \equ{\gl\in \R\text{ with }\ol{\gl}\in \bigg(\frac{\R}{\I}\bigg)^{*}} such that we have 
	\equ{a_i \equiv \gl^{m_i}b_i \mod \I,0\leq i\leq k.}  
\end{defn}

\begin{defn}
	\label{defn:GenProjSpace}
	Let $\R$ be a commutative ring with unity. Let $k\in \mbb{N}$ and \equ{\mcl{GCD}_{k+1}(\R)=\{(a_0,a_1,a_2,\ldots,a_k)\in \R^{k+1}\mid \us{i=0}{\os{k}{\sum}}\langle a_i\rangle=\R\}.}
	Let $m_0,m_1,\ldots,m_k\in \mbb{N}$ and $\I \subsetneq \R$ be an ideal. Let $\sim^{k,(m_0,m_1,\ldots,m_k)}_{\I}$ denote the equivalence relation as in Definition~\ref{defn:ProjSpaceRelation}. Then we define 
	\equ{\mbb{PF}^{k,(m_0,m_1,\ldots,m_k)}_{\I}\eqdef \frac{\mcl{GCD}_{k+1}(\R)}{\sim^{k,(m_0,m_1,\ldots,m_k)}_{\I}}.}
	If $\mcl{I}=\R$ then let $\sim^{k,(m_0,m_1,\ldots,m_k)}_{\I}$ be the trivial equivalence relation on $\mcl{GCD}_{k+1}(\R)$ where any two elements are related.
	We define \equ{\mbb{PF}^{k,(m_0,m_1,\ldots,m_k)}_{\I} \eqdef \frac{\mcl{GCD}_{k+1}(\R)}{\sim^{k,(m_0,m_1,\ldots,m_k)}_{\I}}}
	a singleton set having single equivalence class.
\end{defn}
\begin{defn}
	\label{defn:unitalset}
	Let $\R$ be a commutative ring with unity. Let $k\in \mbb{N}$. We say a finite subset 
	\equ{\{a_1,a_2,\ldots,a_k\}\subs \R} consisting of $k\operatorname{-}$elements (possibly with repetition) 
	is unital or a unital set if the ideal generated by the elements of the set is a unit ideal.
\end{defn}

%%%%%%%%%%%%%%%%%%%%%%%%%%%%%%%%%%%%%%%%%%%%%%%%%%%%%%%%%%%%%%%%%%%%%%%%%%%%%%%%%%%%%%%%%
%%%%%%%%%%%%%%%%%%%%%%%%%%%%%%%%%%%%%%%%%%%%%%%%%%%%%%%%%%%%%%%%%%%%%%%%%%%%%%%%%%%%%%%%%
%%%%%%%%%%%%%%%%%%%%%%%%%%%%%%%%%%%%%%%%%%%%%%%%%%%%%%%%%%%%%%%%%%%%%%%%%%%%%%%%%%%%%%%%%

\begin{defn}
	\label{defn:UnitalSetCond} Let $\R$ be a commutative ring with unity. Let $k\in \mbb{N}$ and $\mcl{I} \sbnq \mcl{\R}$ be an ideal. 
	We say $\mcl{I}$ satisfies the unital set condition USC if for every unital set
	$\{a_1,a_2,\ldots,a_k\} \subs \R$ with $k \geq 2$, there exists an
	element $b \in \langle a_2,\ldots,a_k\rangle$ such that $a_1+b$ is a unit modulo
	$\mcl{I}$.
\end{defn}
\begin{example}
	In the ring $\Z$ any ideal $\langle 0\rangle \neq \mcl{I}\subsetneq \Z$ satisfies the USC using Lemma $4.1$ in C.~P.~Anil Kumar~\cite{CPAKII}. The zero ideal in $\Z$ does not satisfy the USC. More generally, in a commutative ring $\R$ with unity, any ideal $\mcl{I}\sbnq \R$ which is contained in only a finitely many maximal ideals of $\R$, satisfies the USC using Proposition $4.4$ in~\cite{CPAKII}. So any non-zero ideal $\I$ in a Dedekind domain $\R$ satisfies the USC. Also see Example~\ref{Example:USC} for some exotic variety of examples where the ideals $\langle 0\rangle \neq \I\sbnq \R$ in these examples are not contained in only a finitely many maximal ideals but satisfy the USC. See another Example~\ref{Example:NotUSC2} for an ideal $\I\sbnq \R$ which does not satisfy the USC. For difficult examples of ideals $\I\sbnq \R$ which do not satisfy the USC, see Examples~\ref{Example:NotUSC},~\ref{Example:NotSAP}.
\end{example}
Now we give examples of rings $\mcl{S}$ and ideals $\mcl{J}\sbnq \R$ which satisfy the USC but $\mcl{J}$ is contained in infinitely many maximal ideals of $\R$. 
\begin{example}
	\label{Example:USC}
	~\\
		\begin{enumerate}[label=(\roman*)]
			\item 
			For this let $\mathbb{K}$ be a field and $\mcl{S}=\us{i\in \N}{\prod} \mathbb{K}_i$, an infinite direct product of fields where $\mathbb{K}_i=\mathbb{K},i\in\N$. Let $\mcl{J}$ be the zero ideal. Then $\mcl{J}$ is contained in infinitely many maximal ideals and $\mcl{J}$ satisifes the USC.
			
			\item Here let $\mcl{O}_i,i\in \N$ be an infinite collection of Dedekind domains. Let $\mcl{J}_i\sbnq \mcl{O}_i$ be an infinite collection of non-zero ideals in each Dedekind domain $\mcl{O}_i,i\in \N$. Let $\mcl{S}=\us{i\in \N}{\prod}\mcl{O}_i,\mcl{J}=\us{i\in \N}{\prod}\mcl{J}_i$. Then $\mcl{J}$ is an ideal in the ring $\mcl{S}$ and $\frac{\mcl{S}}{\mcl{J}}\cong\us{i\in \N}{\prod} \frac{\mcl{O}_i}{\mcl{J}_i}$. Now the ideal $\mcl{J}$ is contained in infinitely many maximal ideals of $\mcl{S}$ even though $\mcl{J}_i$ is contained in only a finitely many maximal ideals of $\mcl{O}_i$ for each $i\in \N$. Since $\mcl{J}_i\sbnq \mcl{O}_i$ satisfies the USC for each $i\in \N$, we have $\mcl{J}\sbnq \mcl{O}$ satisfies the USC.	
	\end{enumerate}	
	
\end{example}
Now we mention an example of a ring $S$ and ideal $\langle 0\rangle \neq \mcl{J}\sbnq \R$ which does not satisfy the USC. Using Lemma $4.8$ in C.~P.~Anil Kumar~\cite{CPAKII}, we can conclude that the ideal $\langle x\rangle \sbnq \Z[x]$ does not satisfy the USC in $\Z[x]$. But we give a more difficult example below.

\begin{example}
	\label{Example:NotUSC}
	Let $\mcl{S}=\mbb{R}[x,y,z]$ be a polynomial ring in the three variables $x,y,z$. Let $\mcl{J}=\langle x^2+y^2+z^2-1\rangle$. Then clearly the ideal $\mcl{J}$ is not contained in only a finitely many maximal ideals of $\mcl{S}$. We show that $\mcl{J}$ does not satisfy the USC as well. If the ideal $\mcl{J}$ satisfies the USC then the zero ideal $\I=\langle 0\rangle \sbnq \R=\frac{\mcl{S}}{\I}$ satisfies the USC in $\R$ using Lemma $4.8$ in C.~P.~Anil Kumar~\cite{CPAKII}. Let $\mcl{I}_0=\langle 0\rangle,\mcl{I}_1=\R,\mcl{I}_2=\R, m^i_j=1, 0\leq i,j\leq 2$. The ideals $\I_0,\I_1,\I_2$ are mutually co-maximal and their product ideal is the zero ideal $\I$. Consider the map $SL_3(\R) \lra \mbb{PF}_{\mcl{I}_0}^{2,(1,1,1)} \times \mbb{PF}_{\mcl{I}_1}^{2,(1,1,1)} \times \mbb{PF}_{\mcl{I}_2}^{2,(1,1,1)}$. This map is surjective using Theorem $\Gom$ in~\cite{CPAKII}. But we have proved that this map is not surjective in Example $2.11$ in~\cite{CPAKII} which is a contradiction.
\end{example}
\begin{defn}
Let $k\in \N$. For $\R$, a commutative ring with unity, let $G_k(\R)$ be either the group $SL_{k+1}(\R)$ or the group $SP_{2k}(\R)$. An ideal $\mcl{J}\sbnq \R$ has the strong approximation property (SAP) with respect to $G_k(\R)$ if the reduction map $G_k(\R) \lra G_k(\frac{\R}{\mcl{J}})$ is surjective.
\end{defn}
\begin{remark}
See Section~\ref{sec:SAP} for more about SAP.
\end{remark}
\begin{defn}
	\label{defn:CIS}
Let $k\in \N$. For $\R$, a commutative ring with unity, let $G_k(\R)$ be either the group $SL_{k+1}(\R)$ or the group $SP_{2k}(\R)$. If $\R$ is the zero ring then define $G_k(\R)=\{1\}$. We say $\Gamma \subseteq G_k(\R)$  is a principal congruence ideal subgroup of $G_k(\R)$ if it is the kernel of the homomorphism $G_k(\R)\lra G_k(\frac{\R}{\mcl{J}})$ for some ideal $\mcl{J}\subseteq \R$. We denote here the subgroup $\Gamma$ by $\Gamma_k(\mcl{J})$. We say a subgroup $\Gamma \subseteq G_k(\R)$ is a congruence ideal subgroup of $G_k(\R)$ if it contains a principal congruence ideal subgroup $\Gamma_k(\mcl{J})$ for some ideal $\mcl{J}\subseteq\R$.
\end{defn}
\begin{remark}
In Definition~\ref{defn:CIS}, the ideal $\mcl{J}\sbnq \R$ need not have the SAP.  Depending on the context, the subgroup $\Gamma_k(\mcl{J})$ stands for the kernel of either the map $SL_{k+1}(\R)\lra SL_{k+1}(\frac{\R}{\mcl{J}})$ or the map $SP_{2k}(\R)\lra SP_{2k}(\frac{\R}{\mcl{J}})$.
\end{remark}

Now we state the two main results of the article.
\begin{thmOmega}
	\namedlabel{theorem:FullGenSurjOne}{$\Gom$}
Let $\R$ be a commutative ring with unity. 
Let $k\in \mbb{N}$ and $\mcl{I}_i,0\leq i\leq k$ be mutually co-maximal ideals in $\R$ such that either the ideal $\mcl{I}=\us{i=0}{\os{k}{\prod}}\mcl{I}_i$ satisfies the USC or $\mcl{I}=\R$. Let $m_j^i\in \mbb{N}, 0\leq i,j\leq k$ and $\Gamma \subseteq SL_{k+1}(\R)$ be a congruence ideal subgroup. Suppose  $\Gamma\Gamma_k(\I)=SL_{k+1}(\R)$. 
Then the map 
\equ{\Gamma \lra \us{i=0}{\os{k}{\prod}}\mbb{PF}^{k,(m^i_0,m^i_1,\ldots,m^i_k)}_{\I_i}} given by
\equa{&A_{(k+1)\times (k+1)}=[a_{i,j}]_{0\leq i,j\leq k} \lra\\
	&\big([a_{0,0}:a_{0,1}:\ldots: a_{0,k}],[a_{1,0}:a_{1,1}:\ldots: a_{1,k}],\ldots,[a_{k,0}:a_{k,1}:\ldots: a_{k,k}]\big)}
is surjective. In particular if $\Gamma\supseteq \Gamma_k(\mcl{J})$ a principal congruence ideal subgroup for some ideal $\mcl{J}$ such that $\mcl{J}+\mcl{I}=\R$ and either $\I\mcl{J}$ satisfies the USC or $\I\mcl{J}=\R$ then the map is surjective.
\end{thmOmega}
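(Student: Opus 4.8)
The plan is to reduce the assertion for $\Gamma$ to the already-known surjectivity for the full group $SL_{k+1}(\R)$ --- Theorem $\Gom$ of~\cite{CPAKII} --- and then to correct a full-group preimage back into $\Gamma$ using the hypothesis $\Gamma\Gamma_k(\I) = SL_{k+1}(\R)$. Given a target tuple $(P_0,\ldots,P_k) \in \prod_{i=0}^{k}\mbb{PF}^{k,(m^i_0,\ldots,m^i_k)}_{\I_i}$, I would first note that since $\I = \prod_{i=0}^{k}\I_i$ either satisfies the USC or equals $\R$, Theorem $\Gom$ of~\cite{CPAKII} produces a matrix $B \in SL_{k+1}(\R)$ whose image under the row map is exactly $(P_0,\ldots,P_k)$. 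Using $\Gamma\Gamma_k(\I) = SL_{k+1}(\R)$ and the normality of $\Gamma_k(\I)$, I would then write $B = \gamma\delta$ with $\gamma \in \Gamma$ and $\delta \in \Gamma_k(\I)$, and argue that $\gamma$ already maps to the same tuple.

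The crux of the first part is the observation that right multiplication by $\delta \in \Gamma_k(\I)$ leaves the image unchanged. Since the $\I_i$ are mutually co-maximal, $\I = \prod_{i=0}^{k}\I_i \subseteq \I_i$ for each $i$, so $\delta \equiv \mathrm{Id} \bmod \I$ gives $\delta \equiv \mathrm{Id} \bmod \I_i$ entrywise. Writing $r_i$ for the $i$-th row of $\gamma$, the $i$-th row of $B = \gamma\delta$ is $r_i\delta$, and $r_i\delta - r_i = r_i(\delta - \mathrm{Id})$ has all coordinates in $\I_i$; hence the $i$-th rows of $B$ and of $\gamma$ agree coordinatewise modulo $\I_i$. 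Both are unimodular, being rows of elements of $SL_{k+1}(\R)$, so each determines a point of $\mbb{PF}^{k,(m^i_0,\ldots,m^i_k)}_{\I_i}$, and taking $\gl = 1$ in Definition~\ref{defn:ProjSpaceRelation} identifies these two points. As this holds for every $i$, the matrices $B$ and $\gamma$ have the same image, so $\gamma \in \Gamma$ maps to $(P_0,\ldots,P_k)$ and surjectivity onto the product follows.

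For the final ``in particular'' clause my plan is to verify the hypothesis $\Gamma\Gamma_k(\I) = SL_{k+1}(\R)$ from $\Gamma \supseteq \Gamma_k(\mcl{J})$, $\mcl{J}+\I = \R$, and the USC of $\I\mcl{J}$ (or $\I\mcl{J} = \R$); since $\Gamma\Gamma_k(\I) \supseteq \Gamma_k(\mcl{J})\Gamma_k(\I)$, it suffices to show $\Gamma_k(\mcl{J})\Gamma_k(\I) = SL_{k+1}(\R)$. First I would deduce from the USC of $\I\mcl{J}$ the SAP of $\I\mcl{J}$, so that the reduction $\rho\colon SL_{k+1}(\R)\lra SL_{k+1}(\R/\I\mcl{J})$ is surjective. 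Co-maximality gives $\I\mcl{J} = \I\cap\mcl{J}$ and, by the CRT, an isomorphism $SL_{k+1}(\R/\I\mcl{J}) \cong SL_{k+1}(\R/\mcl{J})\times SL_{k+1}(\R/\I)$ under which $\rho(\Gamma_k(\mcl{J})) = \{1\}\times SL_{k+1}(\R/\I)$ and $\rho(\Gamma_k(\I)) = SL_{k+1}(\R/\mcl{J})\times\{1\}$. The identity $(1,n)(m,1) = (m,n)$ then shows that $\rho$ maps $\Gamma_k(\mcl{J})\Gamma_k(\I)$ onto all of $SL_{k+1}(\R/\I\mcl{J})$; since $\ker\rho = \Gamma_k(\I\mcl{J}) \subseteq \Gamma_k(\mcl{J})\cap\Gamma_k(\I)$ and $\Gamma_k(\mcl{J})\Gamma_k(\I)$ is a subgroup ($\Gamma_k(\I)$ being normal), this forces $\Gamma_k(\mcl{J})\Gamma_k(\I) = SL_{k+1}(\R)$, and the main statement applies.

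The main obstacle, I expect, is precisely the passage from USC to SAP needed in the last paragraph: the coordinatewise observation makes the first part almost automatic once Theorem $\Gom$ of~\cite{CPAKII} is granted, but showing that the reduction $\rho\colon SL_{k+1}(\R)\lra SL_{k+1}(\R/\I\mcl{J})$ is onto --- so that $\Gamma_k(\mcl{J})\Gamma_k(\I)$ exhausts $SL_{k+1}(\R)$ --- is the step requiring real justification, via the results relating USC and SAP, with the degenerate cases $\I = \R$, $\I\mcl{J} = \R$, and the zero ring checked separately.
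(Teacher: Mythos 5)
Your proposal is correct and follows essentially the same route as the paper: obtain a preimage in the full group $SL_{k+1}(\R)$ from the surjectivity results of~\cite{CPAKII}, use $\Gamma\Gamma_k(\I)=\Gamma_k(\I)\Gamma=SL_{k+1}(\R)$ together with the fact that a factor in $\Gamma_k(\I)$ does not alter the rows modulo each $\I_i\supseteq\I$, and derive the ``in particular'' clause from $\Gamma_k(\mcl{J})\Gamma_k(\I)=SL_{k+1}(\R)$ via USC $\Rightarrow$ SAP and the Chinese remainder theorem, exactly as in the paper's Lemma~\ref{lemma:Complement}. The only cosmetic differences are that you invoke the projective-space form of the prior surjectivity theorem and correct by right multiplication, whereas the paper uses the entrywise congruence form (Theorem $7.1$ of~\cite{CPAKII}, via Theorem~\ref{theorem:GenSurjMainGenIdeals}) and a left factorization; these are interchangeable by normality of $\Gamma_k(\I)$.
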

\begin{thmSigma}
	\namedlabel{theorem:FullGenSurjOneSP}{$\Gs$}
	Let $\R$ be a commutative ring with unity. 
	Let $k\in \mbb{N}$ and $\mcl{I}_i,1\leq i\leq 2k$ be mutually co-maximal ideals in $\R$ such that either the ideal $\mcl{I}=\us{i=1}{\os{2k}{\prod}}\mcl{I}_i$ satisfies the USC or $\mcl{I}=\R$.
	Let $m_j^i\in \mbb{N}, 1\leq i,j\leq 2k$ and $\Gamma \subseteq SP_{2k}(\R)$ be a congruence ideal subgroup. Suppose  $\Gamma\Gamma_k(\I)=SL_{2k}(\R)$.  
	Then the map 
	\equ{\Gamma \lra \us{i=1}{\os{2k}{\prod}}\mbb{PF}^{2k-1,(m^i_1,m^i_2,\ldots,m^i_{2k})}_{\I_i}} given by
	\equa{&A_{2k\times 2k}=[a_{i,j}]_{1\leq i,j\leq 2k} \lra\\
		&\big([a_{1,1}:a_{1,2}:\ldots: a_{1,2k}],[a_{2,1}:a_{2,2}:\ldots: a_{2,2k}],\ldots,[a_{2k,1}:a_{2k,2}:\ldots: a_{2k,2k}]\big)}
	is surjective. In particular if $\Gamma\supseteq \Gamma_k(\mcl{J})$ a principal congruence ideal subgroup for some ideal $\mcl{J}$ such that $\mcl{J}+\mcl{I}=\R$ and either $\I\mcl{J}$ satisfies the USC or $\I\mcl{J}=\R$ then the map is surjective.
\end{thmSigma}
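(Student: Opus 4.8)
The plan is to reduce Theorem~$\Gs$ to the full-group symplectic surjectivity theorem of~\cite{CPAKII}, by exactly the coset-decomposition mechanism that proves Theorem~$\Gom$, with $SL_{k+1}$ replaced by $SP_{2k}$. Throughout write $\Phi\colon SP_{2k}(\R)\lra\prod_{i=1}^{2k}\mbb{PF}^{2k-1,(m^i_1,\ldots,m^i_{2k})}_{\I_i}$ for the row-association map of the statement. The structural fact I would isolate first is that $\Phi$ is invariant under right multiplication by $\Gamma_k(\I)$: if $\delta\in\Gamma_k(\I)$ then $\delta\equiv\mrm{id}\pmod{\I}$, so $A\delta\equiv A\pmod{\I}$; since $\I=\prod_{i=1}^{2k}\I_i\sbq\I_i$ for each $i$, the $i$-th rows of $A\delta$ and of $A$ coincide modulo $\I_i$ and hence define the same class in $\mbb{PF}_{\I_i}$. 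Thus $\Phi(A\delta)=\Phi(A)$ for every $A\in SP_{2k}(\R)$ and every $\delta\in\Gamma_k(\I)$.

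Granting this, the main assertion is immediate. First I would apply the full symplectic surjectivity result of~\cite{CPAKII}, available precisely because $\I$ satisfies the USC (or $\I=\R$): for an arbitrary target $t$ in the product of generalized projective spaces there is some $A\in SP_{2k}(\R)$ with $\Phi(A)=t$. By the hypothesis $\Gamma\Gamma_k(\I)=SP_{2k}(\R)$ I may factor $A=\gamma\delta$ with $\gamma\in\Gamma$ and $\delta\in\Gamma_k(\I)$. Then $\gamma=A\delta^{-1}$ with $\delta^{-1}\in\Gamma_k(\I)$, so the invariance above gives $\Phi(\gamma)=\Phi(A)=t$. Hence $\gamma\in\Gamma$ is a preimage of $t$ and the restricted map is surjective.

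For the final (``in particular'') clause I would show that the stated hypotheses already force $\Gamma\Gamma_k(\I)=SP_{2k}(\R)$. As $\Gamma\supseteq\Gamma_k(\mcl{J})$, it is enough to prove $\Gamma_k(\mcl{J})\Gamma_k(\I)=SP_{2k}(\R)$. Here $\mcl{J}+\I=\R$ enters: by the Chinese Remainder Theorem $\R/\I\mcl{J}\cong\R/\I\times\R/\mcl{J}$, and since $SP_{2k}$ commutes with finite products of rings this yields $SP_{2k}(\R/\I\mcl{J})\cong SP_{2k}(\R/\I)\times SP_{2k}(\R/\mcl{J})$. Given $A\in SP_{2k}(\R)$, I would lift the pair $(\mrm{id},\ol A)$ to an element $\delta\in SP_{2k}(\R)$ with $\delta\equiv\mrm{id}\pmod{\I}$ and $\delta\equiv A\pmod{\mcl{J}}$; then $\delta\in\Gamma_k(\I)$ while $\gamma:=A\delta^{-1}\in\Gamma_k(\mcl{J})$, and $A=\gamma\delta$. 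I would also record that USC of $\I\mcl{J}$ descends to USC of $\I$: a unit modulo $\I\mcl{J}$ stays a unit after further reduction modulo $\I$, as $\I\mcl{J}\sbq\I$, so the same witness $b$ works and the USC hypothesis needed in the main part is automatic; the degenerate case $\I\mcl{J}=\R$ forces $\I=\mcl{J}=\R$ and is trivial.

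The main obstacle is the lifting step in the previous paragraph, i.e. the passage from the unital set condition for $\I\mcl{J}$ to surjectivity of the reduction $SP_{2k}(\R)\lra SP_{2k}(\R/\I\mcl{J})$ (the strong approximation property for $\I\mcl{J}$), which is exactly what produces the required $\delta$. Once this approximation input and the full-group theorem of~\cite{CPAKII} are granted, the remainder --- the invariance of $\Phi$, the two coset factorizations, and the functorial product decomposition of $SP_{2k}$ --- is formal, and no feature peculiar to type $C_k$ beyond the product-compatibility of the symplectic group is used in the reduction.
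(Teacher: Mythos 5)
Your proposal is correct and follows essentially the same route as the paper: invoke the full-group symplectic surjectivity of~\cite{CPAKII} (the paper cites its Theorem $8.1$ via Theorem~\ref{theorem:GenSurjMain}), factor the resulting matrix through $\Gamma\Gamma_k(\I)=SP_{2k}(\R)$, and observe that the $\Gamma_k(\I)$ factor does not change the rows modulo each $\I_i$ since $\I\sbq\I_i$; the ``in particular'' clause is handled exactly as in Lemma~\ref{lemma:Complement}, by CRT together with the SAP supplied by the USC. The only cosmetic difference is that you phrase the key step as invariance of the row map under right multiplication by $\Gamma_k(\I)$, whereas the paper states it as an entrywise congruence $a_{i,j}\equiv b_{i,j}\bmod\I_i$.
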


\section{\bf{The Strong Approximation Property (SAP) and Generalized Euclidean Rings (GE-Rings)}}
\label{sec:SAP}
In this section we discuss some motivating examples about the SAP and prove a theorem with a converse relating the SAP and GE-Rings.

\begin{example}
	Let $\mcl{O}$ be a Dedekind domain and let $k\in \N$. Any ideal $\mcl{J}\sbnq \R$ has the SAP for both $SL_{k+1}(\mcl{O})$ and $SP_{2k}(\mcl{O})$. More generally, in a commutative ring $\R$ with unity, if an ideal $\mcl{J}\sbnq \R$ satisfies the USC, then it has the SAP with respect to $SL_{k+1}(\R)$ using Theorem $1.7$ on Page 338 in C.~P.~Anil Kumar~\cite{MR3887364} and it has the SAP with respect to $SP_{2k}(\R)$ using Theorem $\Gl$ in C.~P.~Anil Kumar~\cite{CPAKII}.   
\end{example}

We give another example of a ring $\R$ and an ideal $\I$ where the ideal does  satisfy the SAP with respect to $SL_{k+1}(\R),k\in \N$ but does not satisfy the USC in $\R$. So we cannot apply Theorem $1.7$ on Page $338$ in C.~P.~Anil Kumar~\cite{MR3887364}. 
\begin{example}
	\label{Example:NotUSC2} 
	Let $\R=\mbb{Z}[x]$ and $\I=\langle 5\rangle$. Now the set $\{x,3x^2-1\}$ is unital in $\Z[x]$. However $x+t(3x^2-1)$ is not a unit modulo $\I$ for any $t\in\Z[x]$. The units in $\mbb{F}_5[x]$ are non-zero constant polynomials which are of degree zero where as $x+t(3x^2-1)$ modulo $\langle 5\rangle$ is of degree at least $1$ which is a contradiction. So $\I$ does not satisfy the USC in $\Z[x]$.
	Since $\mbb{F}_5[x]$ is an Euclidean domain we have that $SL_{k+1}(\mbb{F}_5[x])$ is generated by elementary matrices of the form $E_{ij}(t)=I+te_{ij},1\leq i\neq j\leq k+1$ where $t\in \mbb{F}_5[x]$. This is because, to prove for any $k\in \N$, it is enough to express  the $2$-by-$2$ transposition determinant one matrix \equ{T=\mattwo 0{-1}10=\mattwo 1{-1}01\mattwo 1011\mattwo 1{-1}01} and the $2$-by-$2$ diagonal determinant matrix \equ{D(s)=\mattwo s00{s^{-1}}=\mattwo 10{s^{-1}}1\mattwo 1{1-s}01\mattwo 10{-1}1\mattwo 1{1-s^{-1}}01}
	and we can use Euclidean algorithm to prove this generation result. Now
	elementary matrices $E_{ij}(t),i\neq j$ are in the image of the map $SL_{k+1}(\Z[x]) \lra SL_{k+1}(\mbb{F}_5[x])$. Hence the map is surjective. Therefore the ideal $\I$ satisfies the SAP with respect to $SL_2(\R)$.
\end{example}
Thus we have the following theorem which we state without proof. The converse of this theorem is also given later after Remark~\ref{remark:GE}.
\begin{theorem}
	\label{theorem:SAP}
	Let $\R$ be a commutative ring with unity. Let $k\in \N$ and $\I\sbnq \R$ be a ideal.  Suppose $SL_k(\frac{\R}{\I})$ is generated by the elementary matrices of the form $E_{ij}(t)=I+te_{ij},t\in \frac{\R}{\I},i\neq j$. Then the reduction map $SL_k(\R)\lra SL_k(\frac{\R}{\I})$ is surjective, that is, the ideal $\I$ satisfies the SAP with respect to $SL_k(\R)$. 
\end{theorem}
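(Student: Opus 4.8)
The plan is to use the elementary principle that a group homomorphism is surjective as soon as its image contains a generating set of the target. First I would record that the canonical quotient map $\gp\colon \R \lra \frac{\R}{\I}$ is surjective and that applying $\gp$ to each entry of a matrix defines a group homomorphism $\Gf\colon SL_k(\R) \lra SL_k(\frac{\R}{\I})$: entrywise reduction respects matrix multiplication and commutes with the determinant, so it sends determinant-one matrices to determinant-one matrices. In particular $\Gf(SL_k(\R))$ is a subgroup of $SL_k(\frac{\R}{\I})$.

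Next I would show that every elementary matrix of $SL_k(\frac{\R}{\I})$ lies in this image. Given $\ol{t}\in \frac{\R}{\I}$ and indices $i\neq j$, surjectivity of $\gp$ supplies a lift $t\in \R$ with $\gp(t)=\ol{t}$; then $E_{ij}(t)=I+te_{ij}$ is a determinant-one matrix over $\R$ and $\Gf(E_{ij}(t))=I+\ol{t}e_{ij}=E_{ij}(\ol{t})$. Thus each generator $E_{ij}(\ol{t})$ of $SL_k(\frac{\R}{\I})$ lies in $\Gf(SL_k(\R))$.

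Finally, invoking the hypothesis that these elementary matrices generate $SL_k(\frac{\R}{\I})$, the subgroup $\Gf(SL_k(\R))$ contains a full generating set and must therefore equal $SL_k(\frac{\R}{\I})$. This is exactly surjectivity of the reduction map, that is, the SAP of $\I$ with respect to $SL_k(\R)$.

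I do not expect a genuine obstacle in this argument; it is the standard lift-the-generators technique, and the only points needing verification are the routine facts that entrywise reduction is a well-defined group homomorphism and that it carries an elementary matrix to the corresponding elementary matrix over the quotient. The substantive content is carried entirely by the hypothesis of generation by elementary matrices, without which the image of $\Gf$ need not fill out $SL_k(\frac{\R}{\I})$.
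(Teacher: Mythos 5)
Your proof is correct and is exactly the argument the paper has in mind: the paper states this theorem without a formal proof, but the preceding Example~\ref{Example:NotUSC2} sketches the same lift-the-elementary-generators reasoning. Nothing further is needed.
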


\begin{remark}
	\label{remark:GE}
	For a commutative ring $\R$ with unity and $k\in \N$ we have that $GL_k(\R)$ is generated by the elementary matrices of the form $E_{ij}(t)=I+te_{ij},t\in \R,i\neq j$ and the diagonal matrices $D(s_1,s_2,\ldots,s_k)=\Diag(s_1,s_2,\ldots,s_k)$ where each $s_i,1\leq i\leq k$ is invertible in $\R$ if and only if $SL_k(\R)$ is generated by only the elementary matrices of the form $E_{ij}(t)=I+te_{ij},t\in \R,i\neq j$. Ring $\R$ which satisfies this generation condition for $GL_k(\R)$ or $SL_k(\R)$ is a $GE_k$-ring and if it satisfied for all $k\in \N$ then it is called a GE-ring, (GE stands for Generalized Euclidean). Refer to P.~M.~Cohn~\cite{MR0207856}. Examples of such commutative GE-rings are
	\begin{itemize}
		\item Euclidean domains (commutative), 
		\item the finite direct product of commutative GE-rings, 
		\item also commutative local rings as mentioned Section $4$ in~\cite{MR0207856}. Also refer to W.~Klingenberg~\cite{MR0143817}.
	\end{itemize}
	Examples of commutative rings which are not GE-rings are
	\begin{itemize}
		\item The ring of integers in an imaginary quadratic number field when the ring of integers is not a Euclidean domain.
		\item the polynomial ring $\Z[x_1,x_2,\ldots,x_n]$ in $n$-variables with integer coefficients where $n\in \N$,
		\item the polynomial ring $\mbb{K}[x_1,x_2,\ldots,x_n]$ in $n$-variables with coefficients in the field $\mbb{K}$ where $n\in \N$ and $n\geq 2$.   
	\end{itemize} 	
\end{remark}
Now we give a short proof of the converse of Theorem~\ref{theorem:SAP} under an additional assumption as follows.
\begin{theorem}
	\label{theorem:SAPConverse}
Let $\R$ be a commutative ring with unity, Let $k\in \N$ and $\I\sbnq \R$ be an ideal. Suppose the ideal $\I$ satisfies the SAP with respect to $SL_k(\R)$ and $SL_k(\R)$ is generated by only the elementary matrices of the form $E_{ij}(t)=I+te_{ij},t\in \R,i\neq j$. Then $SL_k(\frac{\R}{\I})$ is generated by only the elementary matrices of the form $E_{ij}(t)=I+te_{ij},t\in \frac{\R}{\I},i\neq j$.
\end{theorem}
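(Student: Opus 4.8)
The plan is to exploit the fact that the reduction homomorphism carries the elementary generators of $SL_k(\R)$ directly onto the elementary matrices of $SL_k(\frac{\R}{\I})$, so that surjectivity of this homomorphism forces the images of the generators to generate the target. In other words, this is an instance of the general principle that a surjective group homomorphism sends a generating set to a generating set, specialized to the case where the generating set consists of elementary matrices.

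First I would fix the reduction homomorphism $\pi_{*}\colon SL_k(\R)\lra SL_k(\frac{\R}{\I})$ induced by the quotient ring map $\pi\colon\R\lra\frac{\R}{\I}$, and record the key compatibility: applying $\pi$ entrywise to $E_{ij}(t)=I+te_{ij}$ sends the identity to the identity and the single off-diagonal entry $t$ to $\ol{t}=\pi(t)$, so that $\pi_{*}(E_{ij}(t))=I+\ol{t}\,e_{ij}=E_{ij}(\ol{t})$. Thus $\pi_{*}$ carries each elementary generator of $SL_k(\R)$ to the corresponding elementary matrix over $\frac{\R}{\I}$.

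Next I would take an arbitrary element $\ol{M}\in SL_k(\frac{\R}{\I})$. By the SAP hypothesis the map $\pi_{*}$ is surjective, so there is a lift $M\in SL_k(\R)$ with $\pi_{*}(M)=\ol{M}$. By the second hypothesis, $M$ is a product of elementary matrices, say $M=E_{i_1 j_1}(t_1)E_{i_2 j_2}(t_2)\cdots E_{i_n j_n}(t_n)$ with each $t_\ell\in\R$ and $i_\ell\neq j_\ell$. Since $\pi_{*}$ is a group homomorphism it preserves products in order, and applying it together with the compatibility above gives $\ol{M}=\pi_{*}(M)=E_{i_1 j_1}(\ol{t_1})E_{i_2 j_2}(\ol{t_2})\cdots E_{i_n j_n}(\ol{t_n})$, exhibiting $\ol{M}$ as a product of elementary matrices over $\frac{\R}{\I}$. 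As $\ol{M}$ was arbitrary, these elementary matrices generate $SL_k(\frac{\R}{\I})$, which is the desired conclusion.

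I do not expect a serious obstacle here: the argument is essentially formal, and its entire content is the standard fact that a surjective homomorphism maps a generating set onto a generating set. The only point requiring verification is the entrywise identity $\pi_{*}(E_{ij}(t))=E_{ij}(\ol{t})$, which is immediate from the definition $E_{ij}(t)=I+te_{ij}$; everything else follows from combining this with the two hypotheses. It is worth noting that neither commutativity of $\R$ nor the proper containment $\I\subsetneq\R$ is actually used in this step, so the statement is a purely group-theoretic consequence of the two stated hypotheses.
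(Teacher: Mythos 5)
Your proposal is correct and follows exactly the paper's argument: lift an arbitrary element of $SL_k(\frac{\R}{\I})$ via the SAP, factor the lift into elementary matrices over $\R$, and push the factorization down through the reduction homomorphism. The only difference is that you spell out the verification $\pi_{*}(E_{ij}(t))=E_{ij}(\ol{t})$, which the paper leaves implicit.
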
 
\begin{proof}
Let $A\in SL_k(\frac{\R}{\I})$. Then by the SAP we have that there exists $B\in SL_k(\R)$ whose image is the matrix $A$. Now express $B$ as the product of elementary matrices. The image of this product expresses $A$ as the product of elementary matrices. This proves the theorem.
\end{proof}

\begin{remark}
Let $\R$ be a commutative ring with unity. Let $k\in \N$ and $\I\sbnq \R$ be an ideal.	
If $\frac{\R}{\I}$ is a $GE_k$-ring then $\I$ satisfies the SAP with respect to $SL_k(\R)$. Conversely if $\I$ satisfies the SAP with respect to $SL_k(\R)$ and $\R$ is a $GE_k$-ring then $\frac{\R}{\I}$ is a $GE_k$-ring.
\end{remark}
Now we mention an example of a ring $\R$ and ideal $\langle 0\rangle\neq  \I\sbnq \R$ such that the ideal $\I$ does not have the SAP with respect to $SL_3(\R)$. Hence in particular the ideal $\I$ does not satisfy the USC as well. Here the ring $\frac{\R}{\I}$ must not be a $GE_3$-ring using Theorem~\ref{theorem:SAP}. 
\begin{example}
\label{Example:NotSAP}
Let $\R=\mbb{R}[x,y]$ and $\I=\langle x^2+y^2-1\rangle$. The group $SL_3(\R)$ is generated by only the elementary matrices of the form $E_{ij}(t)=I+te_{ij},t\in \R,i\neq j$.	For a proof of this fact see A.~A.~Suslin~\cite{MR0472792}. So $\R$ is a $GE_3$-ring. Now the ring $SL_3(\frac{\R}{\I})$ is not generated by only the elementary matrices of the form $E_{ij}(t)=I+te_{ij},t\in \R,i\neq j$, that is, $\frac{\R}{\I}$ is not a $GE_3$-ring.
For example the matrix \equ{\matthree {\ol{x}}{-\ol{y}}0{\ol{y}}{\ol{x}}0001\in SL_3(\frac{\R}{\I})}
is not a product of elementary matrices. For a proof of this fact see proof of Proposition $8.12$ in Chapter I of T.~Y.~Lam~\cite{MR2235330} on Pages $57-58$. So the map $SL_3(\R)\lra SL_3(\frac{\R}{\I})$ is not surjective. Hence the ideal $\I$ does not have the SAP with respect to $SL_3(\R)$. So the ideal $\I=\langle x^2+y^2-1\rangle \sbnq \mbb{R}[x,y]$ does not satisfy the USC.
\end{example}

\section{\bf{Proof of the Main Theorems}}
We begin with a lemma.
\begin{lemma}
	\label{lemma:Complement}
Let $\R$ be a commutative ring with unity and let $\I,\mcl{J}\sbnq \R$ be two co-maximal ideals such that the product ideal $\I\mcl{J}$ satisfies the USC. Let $k\in \N$ and $G_k(\R)$ be either the group $SL_{k+1}(\R)$ or the group $SP_{2k}(\R)$. Then the two principal congruence normal subgroups $\Gamma_k(\I),\Gamma_k(\mcl{J}) \trianglelefteq  G_k(\R)$ satisfy the following.
\begin{enumerate}
\item $\Gamma_k(\I)\cap \Gamma_k(\mcl{J})=\Gamma_k(\I\mcl{J})$,
\item $\Gamma_k(\mcl{I})\Gamma_k(\mcl{J})=G_k(\R)$.
\item $G_k(\frac{\R}{\I})\cong \Gamma_k(\mcl{J}),G_k(\frac{\R}{\mcl{J}})\cong \Gamma_k(\I),G_k(\frac{\R}{\I\mcl{J}})\cong \Gamma_k(\I)\oplus \Gamma_k(\mcl{J})$
\end{enumerate}
\end{lemma}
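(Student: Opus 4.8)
The plan is to extract everything from a single input, namely the \emph{surjectivity} of the reduction map $\pi\colon G_k(\R)\lra G_k(\R/\I\mcl{J})$, which is exactly the SAP guaranteed by the hypothesis that $\I\mcl{J}$ satisfies the USC (or that $\I=\R$), via the USC$\,\Rightarrow\,$SAP results recalled in Section~\ref{sec:SAP}. Before using it I would record two purely ring-theoretic facts: since $\I$ and $\mcl{J}$ are co-maximal, $\I\cap\mcl{J}=\I\mcl{J}$, and the Chinese Remainder isomorphism $\R/\I\mcl{J}\cong \R/\I\times\R/\mcl{J}$ holds. I would then note that $G_k$ carries finite products of rings to products of groups, i.e. $G_k(R_1\times R_2)\cong G_k(R_1)\times G_k(R_2)$; for $SL_{k+1}$ this is because the determinant is computed componentwise, and for $SP_{2k}$ because the condition $A^{t}\Omega A=\Omega$ is checked componentwise. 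Combining these yields a canonical isomorphism $G_k(\R/\I\mcl{J})\cong G_k(\R/\I)\times G_k(\R/\mcl{J})$ through which $\pi$ factors the two separate reductions; in particular surjectivity of $\pi$ forces both $G_k(\R)\lra G_k(\R/\I)$ and $G_k(\R)\lra G_k(\R/\mcl{J})$ to be surjective.

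Part (1) I would prove directly, without the USC hypothesis: an element $A$ lies in $\Gamma_k(\I)\cap\Gamma_k(\mcl{J})$ iff every entry of $A-\mrm{Id}$ lies in both $\I$ and $\mcl{J}$, i.e. in $\I\cap\mcl{J}=\I\mcl{J}$, which says exactly $A\in\Gamma_k(\I\mcl{J})$.

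For Part (2) the key reduction is the claim that the restriction $\Gamma_k(\I)\lra G_k(\R/\mcl{J})$ of the reduction map is \emph{surjective}. Given $g\in G_k(\R/\mcl{J})$, I would lift the element $(\mrm{Id},g)\in G_k(\R/\I)\times G_k(\R/\mcl{J})\cong G_k(\R/\I\mcl{J})$ to some $B\in G_k(\R)$ using the surjectivity of $\pi$; its reduction mod $\I$ is $\mrm{Id}$, so $B\in\Gamma_k(\I)$, while its reduction mod $\mcl{J}$ is $g$. Granting this, for arbitrary $A\in G_k(\R)$ I pick $B\in\Gamma_k(\I)$ with the same reduction mod $\mcl{J}$ as $A$; then $B^{-1}A\equiv\mrm{Id}\pmod{\mcl{J}}$ lies in $\Gamma_k(\mcl{J})$, and $A=B\,(B^{-1}A)\in\Gamma_k(\I)\Gamma_k(\mcl{J})$, giving $\Gamma_k(\I)\Gamma_k(\mcl{J})=G_k(\R)$.

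Part (3) is then group-theoretic bookkeeping, understood inside the quotient $\ol{G}\eqdef G_k(\R)/\Gamma_k(\I\mcl{J})$ (the stated isomorphisms are to be read after passing to this quotient, whose formation is forced because $\Gamma_k(\I\mcl{J})$ is the common kernel). From the surjectivity of $\pi$ and Part (1) one gets $\ol{G}\cong G_k(\R/\I)\times G_k(\R/\mcl{J})$. The second isomorphism theorem applied to Part (2) gives $\Gamma_k(\I)/\big(\Gamma_k(\I)\cap\Gamma_k(\mcl{J})\big)\cong \Gamma_k(\I)\Gamma_k(\mcl{J})/\Gamma_k(\mcl{J})=G_k(\R)/\Gamma_k(\mcl{J})\cong G_k(\R/\mcl{J})$, and by Part (1) the left side is $\Gamma_k(\I)/\Gamma_k(\I\mcl{J})$; symmetrically $\Gamma_k(\mcl{J})/\Gamma_k(\I\mcl{J})\cong G_k(\R/\I)$. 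These two subgroups of $\ol{G}$ intersect trivially by Part (1), and commute and generate $\ol G$ by Part (2), so $\ol{G}$ is their internal direct product, which is the asserted $G_k(\R/\I\mcl{J})\cong\Gamma_k(\I)\oplus\Gamma_k(\mcl{J})$. The only genuine content here is the surjectivity of $\pi$, i.e. the SAP for $\I\mcl{J}$; once that is in hand the remainder is a Chinese-Remainder/Goursat argument. I therefore expect the main obstacle to be essentially the citation of SAP, together with the one point requiring real care: verifying that $G_k$ commutes with the product decomposition in the symplectic case $G_k=SP_{2k}$, where one must confirm that the standard symplectic form splits compatibly under $\R/\I\mcl{J}\cong\R/\I\times\R/\mcl{J}$ so that the defining group condition is genuinely componentwise.
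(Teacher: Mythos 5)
Your proposal is correct and follows essentially the same route as the paper: the USC hypothesis gives the SAP for $\I\mcl{J}$, the Chinese Remainder Theorem splits $G_k(\frac{\R}{\I\mcl{J}})$ as $G_k(\frac{\R}{\I})\oplus G_k(\frac{\R}{\mcl{J}})$, and part (2) is obtained by lifting an element with one trivial coordinate through the resulting surjection $G_k(\R)\lra G_k(\frac{\R}{\I})\oplus G_k(\frac{\R}{\mcl{J}})$. Your write-up is in places more careful than the paper's (the direct entrywise proof of (1), and the explicit second-isomorphism-theorem reading of (3), which the paper dismisses with ``now (3) also follows''), but the underlying argument is the same.
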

\begin{proof}
The maps $G_k(\R)\lra G_k(\frac{\R}{\I}),G_k(\R)\lra G_k(\frac{\R}{\mcl{J}}),G_k(R)\lra G_k(\frac{\R}{\I\mcl{J}})$ because the three ideals $\I,\mcl{J},\I\mcl{J}$ satisfy SAP as they satisfy the USC using Lemma $4.7$ in C.~P.~Anil Kumar~\cite{CPAKII}. Moreover, since $\I,\mcl{J}$ are co-maximal we have that the map $G_k(\frac{\R}{\I\mcl{J}})\lra G_k(\frac{\R}{\mcl{I}})\oplus G_k(\frac{\R}{\mcl{J}})$ is an isomorphism using Chinese remainder theorem for the pair of co-maximal ideals $\I,\mcl{J}$. Hence we obtain $\Gamma_k(\I)\cap \Gamma_k(\mcl{J})=\Gamma_k(\I\mcl{J})$. This proves $(1)$. Now we have a composite surjection $G_k(\R)\lra G_k(\frac{\R}{\I\mcl{J}}) \lra G_k(\frac{\R}{\mcl{I}})\oplus G_k(\frac{\R}{\mcl{J}})=\frac{G_k(\R)}{\Gamma_k(\I)}\oplus \frac{G_k(\R)}{\Gamma_k(\mcl{J})}$. Let $g\in G_k(\R)$. Then there exists $h\in G_k(\R)$ such that $h\Gamma_k(\I)=g\Gamma_k(\I)$ and $h\Gamma_k(\mcl{J})=\Gamma_k(\mcl{J})$. So $h\in \Gamma_k(\mcl{J})$ and $gh^{-1}=k\in \Gamma_k(\I)$. Hence $g=kh\in \Gamma_k(\mcl{I})\Gamma_k(\mcl{J})$. This proves $(2)$. Now $(3)$ also follows. This proves the lemma.	
\end{proof}
Now we prove two theorems which are useful in proving the main results.
\begin{theorem}
	\label{theorem:GenSurjMainGenIdeals}
	Let $\R$ be a commutative ring with unity. 
	Let $k\in \mbb{N}$ and $\mcl{I}_0,\mcl{I}_1,\ldots,\mcl{I}_k$ be $(k+1)$ mutually co-maximal ideals in $\R$. Let $\mcl{I}=\us{i=0}{\os{k}{\prod}}\mcl{I}_i$ be such that $\I$ satisfies the USC or $\I=\R$. Let $\Gamma \subseteq SL_{k+1}(\R)$ be a  subgroup such that $\Gamma\Gamma_k(\mcl{\I})=SL_{k+1}(\R)$.
	Let $A_{(k+1)\times (k+1)}=[a_{i,j}]_{0\leq i,j \leq k}\in M_{(k+1)\times (k+1)}(\R)$ be such that for every $0\leq i\leq k$ the $i^{th}\operatorname{-}$row is unital, that is, $\us{j=0}{\os{k}{\sum}} \langle a_{i,j}\rangle =\R$
	for $0\leq i\leq k$. Then there exists $B=[b_{i,j}]_{0\leq i,j \leq k}\in \Gamma$ such that we have 
	$a_{i,j}\equiv b_{i,j}\mod \mcl{I}_i,0\leq i,j\leq k$. In particular $\Gamma$ can be a congruence ideal subgroup such that $\Gamma\supseteq \Gamma_k(\mcl{J})$ a principal congruence ideal subgroup for some ideal $\mcl{J}$ such that $\mcl{J}+\mcl{I}=\R$ and either $\I\mcl{J}$ satisfies the USC or $\I\mcl{J}=\R$.
\end{theorem}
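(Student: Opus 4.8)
The plan is to first build a matrix $B_0\in SL_{k+1}(\R)$ whose $i$-th row already agrees with the $i$-th row of $A$ modulo $\mcl{I}_i$ for every $0\leq i\leq k$, and then to push $B_0$ into $\Gamma$ using the factorization hypothesis $\Gamma\Gamma_k(\I)=SL_{k+1}(\R)$ without disturbing these congruences. The congruences required of $B$ only constrain it modulo $\I=\us{i=0}{\os{k}{\prod}}\mcl{I}_i$, and since $\I\sbq \mcl{I}_i$ for each $i$, it suffices to control everything modulo $\I$.

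To construct $B_0$ I would work modulo $\I$ via the Chinese remainder theorem. As the $\mcl{I}_i$ are mutually co-maximal, CRT gives $\frac{\R}{\I}\cong \us{i=0}{\os{k}{\prod}}\frac{\R}{\mcl{I}_i}$ and hence $SL_{k+1}(\frac{\R}{\I})\cong \us{i=0}{\os{k}{\prod}}SL_{k+1}(\frac{\R}{\mcl{I}_i})$. In the $i$-th factor I need a determinant-one matrix whose $i$-th row is the reduction of $(a_{i,0},\ldots,a_{i,k})$ modulo $\mcl{I}_i$. Here the unital hypothesis and the USC do the work: applying the USC to the unital set $\{a_{i,0},\ldots,a_{i,k}\}$ yields $b_i\in \langle a_{i,1},\ldots,a_{i,k}\rangle$ with $a_{i,0}+b_i$ a unit modulo $\I$, hence a unit modulo $\mcl{I}_i$ since $\I\sbq \mcl{I}_i$. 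Elementary column operations over $\frac{\R}{\mcl{I}_i}$ --- legitimate because $b_i$ lies in the ideal generated by the later entries --- then carry the reduced row to $(\bar u,0,\ldots,0)$ with $\bar u$ a unit, which is completable (for instance using a diagonal block $\Diag(\bar u,\bar u^{-1},1,\ldots,1)$ and a sign adjustment to place the row in position $i$). Undoing the column operations produces $M_i\in SL_{k+1}(\frac{\R}{\mcl{I}_i})$ with $i$-th row exactly the prescribed one. Assembling the $M_i$ through the CRT isomorphism gives $\ol{B_0}\in SL_{k+1}(\frac{\R}{\I})$; because $\I$ satisfies the USC it satisfies the SAP (Lemma $4.7$ in~\cite{CPAKII}), so $\ol{B_0}$ lifts to some $B_0\in SL_{k+1}(\R)$ with the asserted row congruences.

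The correction step is then immediate. Writing $B_0=\gamma g$ with $\gamma\in\Gamma$ and $g\in\Gamma_k(\I)$ via the hypothesis $\Gamma\Gamma_k(\I)=SL_{k+1}(\R)$, we have $g\equiv I\bmod \I$, so $B:=\gamma=B_0g^{-1}\equiv B_0\bmod\I$. Since $\I\sbq\mcl{I}_i$, the matrix $B\in\Gamma$ retains the property that its $i$-th row is congruent to that of $A$ modulo $\mcl{I}_i$ for every $i$, which is exactly what is claimed. For the ``in particular'' clause I would invoke Lemma~\ref{lemma:Complement}$(2)$: when $\mcl{J}+\I=\R$ and $\I\mcl{J}$ satisfies the USC (or equals $\R$), that lemma gives $\Gamma_k(\I)\Gamma_k(\mcl{J})=SL_{k+1}(\R)$, so $\Gamma\supseteq\Gamma_k(\mcl{J})$ forces $\Gamma\Gamma_k(\I)=SL_{k+1}(\R)$ and the main hypothesis is satisfied.

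The hardest point is the completion problem inside each CRT factor, since a unimodular row over $\frac{\R}{\mcl{I}_i}$ need not extend to an invertible matrix in general; it is precisely the USC, fed the unital row of $A$, that dissolves this obstruction by rendering the row elementarily equivalent to $(\bar u,0,\ldots,0)$. The remaining care is bookkeeping: keeping all determinants equal to $1$ while absorbing the unit $\bar u$, placing the prescribed row in the correct position $i$, and verifying that the USC on $\I$ transfers to each quotient $\frac{\R}{\mcl{I}_i}$ through the inclusion $\I\sbq\mcl{I}_i$.
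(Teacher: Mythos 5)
Your proposal is correct, and its skeleton is the same as the paper's: first produce some $X\in SL_{k+1}(\R)$ whose $i$-th row is congruent to that of $A$ modulo $\mcl{I}_i$ for each $i$, then use $\Gamma\Gamma_k(\I)=\Gamma_k(\I)\Gamma=SL_{k+1}(\R)$ (normality of $\Gamma_k(\I)$) to write $X=YB$ with $Y\in\Gamma_k(\I)$, $B\in\Gamma$, so that $B\equiv X\bmod\I$ and the row congruences survive since $\I\subseteq\mcl{I}_i$; the ``in particular'' clause is handled in both cases by Lemma~\ref{lemma:Complement}. The one place you diverge is the first step: the paper simply invokes Theorem $7.1$ of~\cite{CPAKII} as a black box to get $X$, whereas you re-derive that statement from scratch --- splitting $SL_{k+1}(\R/\I)$ by the Chinese remainder theorem into $\prod_i SL_{k+1}(\R/\mcl{I}_i)$, using the USC on each unital row of $A$ to make its reduction elementarily column-equivalent to $(\bar u,0,\ldots,0)$ and hence completable to a determinant-one matrix in the $i$-th factor, and then lifting the assembled matrix by the SAP (which the USC guarantees; the paper attributes this to Theorem $1.7$ of~\cite{MR3887364} rather than to Lemma $4.7$ of~\cite{CPAKII}, a harmless citation slip on your part). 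This buys self-containedness at the cost of length; the paper's version keeps the present proof to the genuinely new content, namely the correction of $X$ into the congruence ideal subgroup $\Gamma$. You could also note explicitly that the case $\I=\R$ is trivial (all $\mcl{I}_i=\R$, so any element of $\Gamma$ works), which the paper dispatches with one sentence.
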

\begin{proof}
Now we have either $\I=\R$ or $\I$ satisfies the USC using Lemma $4.7$ in C.~P.~Anil Kumar~\cite{CPAKII}. Now we can clearly assume $\I\neq \R$. So using Theorem $7.1$ in~\cite{CPAKII} there exists $X=[x_{i,j}]_{0\leq i,j \leq k}\in SL_{k+1}(\R)$ such that $a_{i,j}\equiv x_{i,j}\mod \mcl{I}_i,0\leq i,j\leq k$. Since $\Gamma\Gamma_k(\I)=\Gamma_k(\I)\Gamma=SL_{k+1}(\R)$ we have $X=YB$ for some $Y\in\Gamma_k(\I),B\in \Gamma$.  Therefore if $B=[b_{i,j}]_{0\leq i,j \leq k}\in \Gamma$ then we have 
$a_{i,j}\equiv b_{i,j}\mod \mcl{I}_i,0\leq i,j\leq k$.
 
In case $\Gamma \supseteq \Gamma_k(\mcl{J})$, then using Lemma~\ref{lemma:Complement} we have $\Gamma_k(\I)\Gamma_k(\mcl{J})=SL_{k+1}(\R)=\Gamma\Gamma_k(\I)$.  Hence the theorem follows for $\Gamma$.
\end{proof}
The corresponding theorem for $SP_{2k}(\R),k\in \N$ is as follows.
\begin{theorem}
	\label{theorem:GenSurjMain}
	Let $\R$ be a commutative ring with unity. 
	Let $k\in \mbb{N}$ and $\mcl{I}_1,\mcl{I}_1,\ldots,\mcl{I}_{2k}$ be $2k$ mutually co-maximal ideals in $\R$. Let $\mcl{I}=\us{i=1}{\os{2k}{\prod}}\mcl{I}_i$ be such that either $\I$ satisfies the USC or $\I=\R$. Let $\Gamma \subseteq SP_{2k}(\R)$ be a  subgroup such that $\Gamma\Gamma_k(\mcl{\I})=SP_{2k}(\R)$.
	Let $M_{(2k)\times (2k)}=[m_{i,j}]_{1\leq i,j \leq 2k}\in M_{(2k)\times (2k)}(\R)$ such that for every $1\leq i\leq 2k$ the $i^{th}\operatorname{-}$row is unital, that is, $\us{j=1}{\os{2k}{\sum}} \langle m_{i,j}\rangle =\R$
	for $1\leq i\leq 2k$. Then there exists $N=[n_{i,j}]_{1\leq i,j \leq 2k}\in \Gamma$ such that we have 
	$m_{i,j}\equiv n_{i,j}\mod \mcl{I}_i,1\leq i,j\leq 2k$. In particular $\Gamma$ can be a congruence ideal subgroup such that $\Gamma\supseteq \Gamma_k(\mcl{J})$ a principal congruence ideal subgroup for some ideal $\mcl{J}$ such that $\mcl{J}+\mcl{I}=\R$ and either $\I\mcl{J}$ satisfies the USC or $\I\mcl{J}=\R$.
\end{theorem}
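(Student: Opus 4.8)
The plan is to follow, \emph{mutatis mutandis}, the proof of Theorem~\ref{theorem:GenSurjMainGenIdeals}, with the special linear group $SL_{k+1}(\R)$ replaced throughout by the symplectic group $SP_{2k}(\R)$ and the special linear surjectivity result of~\cite{CPAKII} replaced by its symplectic counterpart. First I would dispose of the degenerate case exactly as in the special linear proof: by Lemma $4.7$ in~\cite{CPAKII} the hypothesis guarantees that either $\I=\R$ or $\I$ satisfies the USC, and the case $\I=\R$ is handled directly, so we may assume $\I\sbnq\R$ is a proper ideal satisfying the USC.

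The first substantive step is to produce a single \emph{symplectic} matrix realizing the prescribed rows modulo the $\mcl{I}_i$. Since $M=[m_{i,j}]$ has every row unital, I would invoke the symplectic surjectivity theorem of~\cite{CPAKII} (the $SP_{2k}(\R)$ analogue of the Theorem $7.1$ used in the proof of Theorem~\ref{theorem:GenSurjMainGenIdeals}) to obtain a matrix $X=[x_{i,j}]\in SP_{2k}(\R)$ satisfying $m_{i,j}\equiv x_{i,j}\bmod\mcl{I}_i$ for all $1\leq i,j\leq 2k$. I expect \emph{this} to be the main obstacle: unlike the special linear situation, prescribing all $2k$ rows modulo the $\mcl{I}_i$ while remaining inside the symplectic group is genuinely constrained, because the symplectic relation couples the rows of the matrix. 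Consequently this step is not a routine translation of the $SL$ argument but rests squarely on the availability of the symplectic surjectivity theorem of~\cite{CPAKII}; once $X$ is in hand, the remainder of the proof is formally identical to the special linear case.

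The second step transfers $X$ into the subgroup $\Gamma$. By hypothesis $\Gamma\Gamma_k(\I)=\Gamma_k(\I)\Gamma=SP_{2k}(\R)$, so I can factor $X=YN$ with $Y\in\Gamma_k(\I)$ and $N=[n_{i,j}]\in\Gamma$. Because the ideals are mutually co-maximal we have $\I=\us{i=1}{\os{2k}{\prod}}\mcl{I}_i\sbq\mcl{I}_i$ for each $i$, so $Y\equiv\mathrm{Id}\bmod\mcl{I}_i$; hence $X\equiv N\bmod\mcl{I}_i$ row by row, and combining this with the congruence from the first step yields $m_{i,j}\equiv n_{i,j}\bmod\mcl{I}_i$ for all $1\leq i,j\leq 2k$, which is exactly the claim. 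Finally, for the ``in particular'' assertion, suppose $\Gamma\spq\Gamma_k(\mcl{J})$ with $\mcl{J}+\I=\R$ and $\I\mcl{J}$ satisfying the USC (or $\I\mcl{J}=\R$). Then $\I$ and $\mcl{J}$ are co-maximal with product satisfying the USC, so Lemma~\ref{lemma:Complement}(2) gives $\Gamma_k(\I)\Gamma_k(\mcl{J})=SP_{2k}(\R)$; since $\Gamma\spq\Gamma_k(\mcl{J})$ and $\Gamma_k(\I)$ is normal, this forces $\Gamma\Gamma_k(\I)=SP_{2k}(\R)$, verifying the standing hypothesis and reducing the final statement to the case already treated.
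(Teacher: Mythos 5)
Your proposal matches the paper's argument essentially verbatim: the paper proves this by repeating the proof of Theorem~\ref{theorem:GenSurjMainGenIdeals} with Theorem $8.1$ of~\cite{CPAKII} (the symplectic surjectivity result you invoke) supplying the matrix $X\in SP_{2k}(\R)$, followed by the same factorization $X=YN$ through $\Gamma_k(\I)\Gamma=SP_{2k}(\R)$ and the same appeal to Lemma~\ref{lemma:Complement} for the ``in particular'' clause. Your additional remark that the symplectic case is not a routine translation because the symplectic relation couples the rows is a fair observation, but that difficulty is entirely absorbed by the cited theorem, exactly as in the paper.
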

\begin{proof}
The proof is similar to the proof of Theorem~\ref{theorem:GenSurjMainGenIdeals} except that here we use Theorem $8.1$ in~\cite{CPAKII}.	
\end{proof}
Now we prove the main Theorem~\ref{theorem:FullGenSurjOne}.
\begin{proof}
Let \equa{\big([a_{0,0}:a_{0,1}:\ldots: a_{0,k}],[a_{1,0}:a_{1,1}:\ldots: a_{1,k}],\ldots,&[a_{k,0}:a_{k,1}:\ldots: a_{k,k}]\big)\\&\in \us{i=0}{\os{k}{\prod}}\mbb{PF}^{k,(m^i_0,m^i_1,\ldots,m^i_k)}_{\I_i}.}
Consider $A_{(k+1)\times (k+1)}=[a_{i,j}]_{0\leq i,j\leq k}\in M_{(k+1)\times (k+1)}(\R)$ for which Theorem~\ref{theorem:GenSurjMainGenIdeals} can be applied. Therefore we get 
$B=[b_{i,j}]_{0\leq i,j\leq k} \in SL_{k+1}(\R)$ such that $b_{i,j}\equiv a_{i,j} \mod \mcl{I}_i,0\leq i,j\leq k$. Hence we get 
\equ{[b_{i,0}:b_{i,1}:\ldots: b_{i,k}]=[a_{i,0}:a_{i,1}:\ldots: a_{i,k}]\in \mbb{PF}^{k,(m^i_0,m^i_1,\ldots,m^i_k)}_{\I_i},0\leq i\leq k.}
In case $\Gamma \supseteq \Gamma_k(\mcl{J})$, then using Lemma~\ref{lemma:Complement} we have $\Gamma_k(\I)\Gamma_k(\mcl{J})=SL_{k+1}(\R)=\Gamma_k(\I)\Gamma$.
This proves the main Theorem~\ref{theorem:FullGenSurjOne}.	
\end{proof}
Now we prove the main Theorem~\ref{theorem:FullGenSurjOneSP}.
\begin{proof}
	Let \equa{\big([a_{1,1}:a_{1,2}:\ldots: a_{1,2k}],[a_{2,1}:a_{2,2}:\ldots: a_{2,2k}],\ldots,&[a_{2k,1}:a_{2k,2}:\ldots: a_{2k,2k}]\big)\\&\in \us{i=1}{\os{2k}{\prod}}\mbb{PF}^{2k-1,(m^i_1,m^i_2,\ldots,m^i_{2k})}_{\I_i}.}
	Consider $A_{2k\times 2k}=[a_{i,j}]_{1\leq i,j\leq 2k}\in M_{2k\times 2k}(\R)$ for which Theorem~\ref{theorem:GenSurjMain} can be applied. Therefore we get 
	$B=[b_{i,j}]_{1\leq i,j\leq 2k} \in \Gamma$ such that $b_{i,j}\equiv a_{i,j} \mod \mcl{I}_i,1\leq i,j\leq 2k$. Hence we get for $1\leq i\leq 2k$,
	\equ{[b_{i,1}:b_{i,2}:\ldots: b_{i,2k}]=[a_{i,1}:a_{i,2}:\ldots: a_{i,2k}]\in \mbb{PF}^{2k-1,(m^i_1,m^i_2,\ldots,m^i_{2k})}_{\I_i}.}
In case $\Gamma \supseteq \Gamma_k(\mcl{J})$, then using Lemma~\ref{lemma:Complement} we have $\Gamma_k(\I)\Gamma_k(\mcl{J})=SP_{2k}(\R)=\Gamma_k(\I)\Gamma$.
	This proves the main Theorem~\ref{theorem:FullGenSurjOneSP}.
\end{proof}
\bibliographystyle{abbrv}

\begin{thebibliography}{1}

\bibitem{MR3887364}
C.~P.~Anil Kumar,
\newblock {O}n the {S}urjectivity of {C}ertain {M}aps,
\newblock {\em The Journal of the Ramanujan Mathematical Society}, Vol. {\bf 33}, No. {\bf 2}, Dec. 2018, pp. 335-378, \url{https://arxiv.org/pdf/1608.03728.pdf}, \url{http://jrms.ramanujanmathsociety.org/archieves/v33-4.html}, \url{http://www.mathjournals.org/jrms/2018-033-004/2018-033-004-001.html}, \url{https://drive.google.com/file/d/19_HJJatvca5BuNdZ0rr4bMumtigH5oy1/preview}, MR3887364

\bibitem{CPAKI}
C.~P.~Anil Kumar,
\newblock {O}n the {S}urjectivity of {C}ertain {M}aps II: {F}or {G}eneralized {P}rojective {S}paces,
\newblock {\em The Journal of the Ramanujan Mathematical Society}, Accepted April, 2020, \url{https://arxiv.org/pdf/1810.03474.pdf}

\bibitem{CPAKII}
C.~P.~Anil Kumar,
\newblock {O}n the {S}urjectivity of {C}ertain {M}aps III: {T}he {U}nital {S}et {C}ondition,
\newblock {\em Preprint arXiv: 1902.09311}, Submitted to a Journal,
\url{https://arxiv.org/pdf/1902.09311.pdf}

\bibitem{MR0207856}
P.~M.~Cohn,
\newblock {O}n the structure of the $GL_2$ of a ring,
\newblock {\em Publications Mathématiques de L’Institut des Hautes Scientifiques}, Vol. {\bf 30}, 1966, pp. 5–53, \url{https://doi.org/10.1007/BF02684355}, MR0207856

\bibitem{MR0143817}
W.~Klingenberg,
\newblock {D}ie {S}truktur der linearen {G}ruppe über einem nichtkommutativen lokalen Ring. ({G}erman),
\newblock {\em Archiv der Mathematik}, Vol. {\bf 13}, 1962, pp. 73-81, \url{https://doi.org/10.1007/BF01650050}, MR0143817

\bibitem{MR2235330}
T.~Y.~Lam,
\newblock {S}erre's {P}roblem on {P}rojective {M}odules,
\newblock {\em Springer Monographs in Mathematics. Springer-Verlag, Berlin}, 2006, pp. xxii+401, ISBN: 978-3-540-23317-6; 3-540-23317-2, \url{https://doi.org/10.1007/978-3-540-34575-6}, MR2235330


\bibitem{MR3220894}
A.~S.~Rapinchuk,
\newblock {S}trong {A}pproximation for {A}lgebraic {G}roups,
\newblock {\em Thin Groups and Superstrong Approximation MSRI Publications}, pp. 269-298, Vol. {\bf 61}, Cambridge University Press, Cambridge, 2014, \url{http://library.msri.org/books/Book61/files/70rapi.pdf}, MR3220894

\bibitem{MR0472792}
A.~A.~Suslin,
\newblock {O}n the {S}tructure of {S}pecial {L}inear {G}roup over {P}olynomial {R}ings
\newblock {\em Izvestiya Akademii Nauk SSSR. Seriya Matematicheskaya},  Vol. {\bf 41}, No. {\bf 2}, pp. 235-252, 1977,  \url{http://www.mathnet.ru/links/79555f718588b6b8e1588af47243dc93/im1800.pdf}
\end{thebibliography}
\def\cprime{$'$}

\end{document}